\newtheorem{theorem}{Theorem}
\newtheorem{proposition}[theorem]{Proposition}
\newtheorem{corollary}[theorem]{Corollary}
\newtheorem{lemma}[theorem]{Lemma}
\theoremstyle{definition}
\newtheorem{example}[theorem]{Example}
\newcommand*\Bell{\ensuremath{\boldsymbol\ell}}
\newcommand{\Omegainj}{\Omega_{\mathrm{inj}}}
\newcommand{\operad}{\mathscr O}
\newcommand{\shiftoperator}{\mathfrak s}
\newcommand{\disk}{\mathbb D}
\newcommand{\initiallist}{\mathbf m}
\newcommand{\terminallist}{\mathbf M}
\newcommand{\incmap}{\mathfrak i}
\newcommand{\partialvz}{\partial_{v_0}}
\newcommand{\id}{\operatorname{id}}
\newcommand{\zero}{\mathbf 0}
\title{Configuration spaces form a Segal semi-dendroidal space}
\author{Philip Hackney}
\address{Department of Mathematics\\ Macquarie University\\ NSW 2109 \\ Australia}
\email{philip@phck.net} 
\begin{document}

\begin{abstract}
The purpose of this short note is to illustrate the utility of (semi-) dendroidal objects in describing certain `up-to-homotopy' operads. 
Specifically, we exhibit a semi-dendroidal space satisfying the Segal condition, whose evaluation at a $k$-corolla is the space of ordered configurations of $k$ points in the $n$-dimensional unit ball.
\end{abstract}

% \date{\today}

\maketitle

By forgetting radii, the $k$th space of the little $n$-disks operad is homotopy equivalent to the configuration space of $k$ ordered points in the unit $n$-disk. 
The latter collection of spaces (as $k$ varies) does not admit the structure of an operad, but in light of this homotopy equivalence should admit the structure of an up-to-homotopy operad.
Corollary \ref{main example} gives one way to make this precise using \emph{semi-dendroidal spaces} satisfying a \emph{Segal condition}.
% Though semi-dendroidal spaces have not yet been thoroughly studied, we expect by analogy with semi-simplicial spaces that they will become increasingly important.

The reader should recall the dendroidal category $\Omega$ from \cite{lectures,mw}; the former reference provides a good overview of basic dendroidal theory, and we will often adopt the same notation in this work.
We let $\Omegainj$ be the wide subcategory generated by isomorphisms and face maps.
A space-valued presheaf on $\Omegainj$ (resp.\ $\Omega$) will be called a semi-dendroidal (resp.\ dendroidal) space.
A planar structure on a (rooted) tree is an assignment, for each tree $T$ and each vertex $v\in T$, a bijection $b_v : \{ 1, \dots, k_v \} \to in(v)$. 
For notational reasons it will be convenient to assume that any given tree comes equipped with a planar structure, though we will not require maps to preserve this extra structure (so we are actually working with the equivalent category which was called $\Omega'$ in Example 2.8 of \cite{bm}).
Given any colored (symmetric) operad $\operad$ in topological spaces, there is an associated dendroidal space (the \emph{dendroidal nerve}) $N_d(\operad)$ with $N_d(\operad)_T = Oper(\Omega(T), \operad)$. 
A point in $N_d(\operad)_T$ may be identified with a pair $(f_0, f_1)$, where $f_0 : E(T) \to col(\operad)$ is a function and $f_1$ assigns to each vertex $v$ of $T$ a point of $\operad(f_0b_v(1), \dots, f_0b_v(k_v); f_0(out(v)))$.

Let $\operad$ be a 2-colored operad in the category of topological spaces; we write $\{ 1,2\}$ as the color set of $\operad$. We ask that $\operad$ satisfies
\begin{equation}\label{one-focused}
	\operad(\Bell; 2) = \begin{cases}
		\ast & \text{if } \Bell = 2 \\
		\varnothing & \text{otherwise}.
	\end{cases}
\end{equation}
Here we are using the notation $\Bell = l_1l_2\ldots l_p$ with $l_i \in \{ 1,2\}$ and $|\Bell| = p\geq 0$ for (ordered) lists in the set $\{ 1,2\}$.
Given two such lists $\Bell = l_1l_2\ldots l_p$ and $\Bell' = l_1'l_2'\ldots l_q'$, write $\Bell \circ_i \Bell' = l_1 \ldots l_{i-1} l_1' \ldots l_q' l_{i+1} \ldots l_p$.
There is a natural partial order on the set of lists of a fixed length, given by entrywise comparison: $l_1\dots l_p \leq l_1' \ldots l_p'$ if and only if $l_i \leq l_i'$ for all $i$.
We will write $\initiallist^p$ (resp.\ $\terminallist^p$) for the list of length $p$ with every entry $1$ (resp.\ $2$).
The operad $\operad$ should come equipped with a collection of weak homotopy equivalences $\shiftoperator_{\Bell,\Bell'} : \operad(\Bell; 1) \to \operad(\Bell'; 1)$ whenever $\Bell \leq \Bell'$. These maps should respect the partial order, that is
\begin{equation*}
	\shiftoperator_{\Bell', \Bell''} \circ \shiftoperator_{\Bell, \Bell'} = \shiftoperator_{\Bell, \Bell''} \qquad \& \qquad \shiftoperator_{\Bell, \Bell} = \id_{\operad(\Bell; 1)}.
\end{equation*}
Further, these should be compatible with the operad structure, in the sense that  
\begin{equation}\label{operad compatibility}
	(\shiftoperator_{\Bell, \Bell'} x) \circ_i (\shiftoperator_{\Bell'', \Bell'''} y) = \shiftoperator_{\Bell \circ_i \Bell'', \Bell' \circ_i \Bell'''} (x \circ_i y)
\end{equation}
whenever both sides are defined (that is, whenever $\Bell \leq \Bell'$, $\Bell'' \leq \Bell'''$, and $l_i = l_i' = 1$) and
$\sigma^* \circ\shiftoperator_{\Bell, \Bell'} = \shiftoperator_{\Bell \cdot \sigma, \Bell' \cdot \sigma}\circ \sigma^*$\label{permutation thing} for $\sigma \in \Sigma_p$.
Such a 2-colored operad $\operad$ along with the data $\{\shiftoperator_{\Bell, \Bell'}\}$ shall be called an \emph{operad with shifts}. 

\begin{example}
Our main example is the \emph{operad of points and little disks}. 
The spaces $\operad(\Bell; 2)$ are determined by \eqref{one-focused}, while a point of $\operad(\Bell; 1)$ consists of $|\Bell|$ pieces of data:
\begin{itemize}
	\item If $l_i = 1$, an affine embedding of the open unit disk $\disk \subset \mathbb R^n$ into $\disk$; we will write $a_i : \disk \to \disk$ for the embedding and $D_i$ for its image.
	\item If $l_i = 2$, a point $d_i \in \disk$.
\end{itemize}
Writing $\overline{D}_i$ for the closure of $D_i$, % in $\mathbb R^n$, 
these data should be a configuration in the sense that, unless $i=j$, we have $D_i \cap D_j = \varnothing$, $d_i \neq d_j$, and $d_i \notin \overline{D}_j$. In order to topologize $\operad(\Bell; 1)$, notice that each affine map $a_i(t) = r_it+c_i$ may be identified with a point $(r_i, c_i) \in \mathbb R_{>0} \times \mathbb R^n$, while $d_j \in \disk \subset \mathbb R^n$. 
We thus regard $\operad(\Bell; 1)$ as a subspace of $\mathbb R^{p_1(n+1) + p_2n}$, where $p_j = |\{ i \,|\, l_i = j\}|$. 
The operad structure is a variation on the usual one for the little $n$-disks operad: when $l_i = 1$, the map
\[
	\circ_i : \operad(\Bell; 1) \times \operad(\Bell'; 1) \to \operad(\Bell \circ_i \Bell'; 1) %\qquad (\mathbf x,\mathbf y) \mapsto \mathbf x \circ_i \mathbf y
\]
is given on a pair $(\mathbf x, \mathbf y)$ by applying the affine transformation $x_i = a_i : \disk \to \disk$ to all of the disks and points that constitute $\mathbf y$, to end up with \[ \mathbf x \circ_i \mathbf y = x_1, \dots, x_{i-1}, a_i y_1, \dots, a_i y_{|\Bell'|}, x_{i+1}, \dots, x_{|\Bell|}. \]
This is an operad with shifts: define $\shiftoperator_{\Bell,\Bell'}(\mathbf x) = \mathbf y$ componentwise as follows. If $l_i = l_i'$ then set $y_i = x_i$. If $l_i < l_i'$, then $l_i = 1$ and $l_i'=2$, so $x_i$ is a disk embedding $a_i$, while $y_i$ is meant to to be a point. In this case, we set $y_i = a_i(0)$, the center of the disk $x_i$. %One can see that $\shiftoperator_{\Bell, \Bell'}$ is a homotopy equivalence by restricting to the case when $l_i = l_i'$ except for a single $i_0$:  it is just a projection $\operad(\Bell'; 1) \times I \to \operad(\Bell';1)$ where $I= (0, \epsilon)$ or $(0, \epsilon]$.
At the two extremes, we have that $\operad( \initiallist^k; 1) = \operad( 1 \overset{k}\cdots 1; 1)$ is the $k$-th space of the usual little $n$-disks operad and $\operad( \terminallist^k ; 1) = \operad( 2 \overset{k}\cdots 2; 1)$ is the configuration space of $k$ points in $\disk$.

To show that $\shiftoperator_{\Bell, \Bell'}$ is a homotopy equivalence, it is enough do so when $\Bell < \Bell'$ with $l_i = l_i'$ for all $i$ except a single $i_0$ where $1 = l_{i_0} < l_{i_0}' = 2$. Define a continuous $\epsilon : \operad(\Bell'; 1) \to \mathbb R_{>0}$ by letting $\epsilon(\mathbf y)$ be the minimum of the distances of $d_{i_0} = y_{i_0}$ to $d_i$ ($i\neq i_0$), $D_i$, and $\partial \disk$. Let $g : \operad(\Bell'; 1) \to \operad(\Bell; 1)$, $g(\mathbf y) = \mathbf x$ be the right inverse to $\shiftoperator_{\Bell, \Bell'}$ which is given by $x_i = y_i$ for $i\neq i_0$, and $x_{i_0}$ is the affine embedding $u \mapsto \frac{1}{2} \epsilon(\mathbf y) u + d_{i_0}$.
Writing $x_{i_0} = (u \mapsto r_{\mathbf x} u + c_{\mathbf x})$, define
\[
	H_t(\mathbf x) = x_1, \dots, x_{i_0-1}, \left( u \mapsto \left[ \left(tr_{\mathbf x} + (1-t) \frac{1}{2}\epsilon(\shiftoperator_{\Bell, \Bell'}(\mathbf x))\right)u + c_{\mathbf x}\right] \right), x_{i_0+1}, \dots, x_{|\Bell|}
\]
which is a homotopy from $g \circ \shiftoperator_{\Bell, \Bell'}$ to the identity of $\operad(\Bell; 1)$.

\end{example}

Given any operad with shifts $(\operad, \shiftoperator_{\Bell, \Bell'})$, we will presently define an associated semi-dendroidal space $X$.
Let $X_\eta = *$.
The most concise description of the spaces $X_T$, $T\neq \eta$, are as subspaces of $N_d(\operad)_T = Oper(\Omega(T), \operad)$.
We say that a colored operad map $\Omega(T) \to \operad$ is in $X_T$ if and only if the map on color sets $E(T) \to \{1,2\}$ sends all of the leaves to 2, the root to 1, and all of the internal edges to 1. 
Define two subspaces $X_T^L$ and $X_T^{IR}$ of $N_d(\operad)_T$. The space $X_T^L$ is the subspace consisting of those maps which send all leaves to 2, and $X_T^{IR}$ is the subspace consisting of those maps which send the root and all internal edges to 1. If $T\neq \eta$, then $X_T = X_T^L \cap X_T^{IR}$.
For use in later equations, we will write $\incmap' = \incmap'_T : X_T^L \to N_d(\operad)_T$, $\incmap'' = \incmap''_T : X_T \to X_T^L$, and $\incmap = \incmap' \circ \incmap'' = \incmap_T : X_T \to N_d(\operad)_T$ for the inclusions. 

For each non-trivial tree $T$, there is a map $\shiftoperator_T : N_d(\operad)_T \to X_T^L$.
Fix a planar structure on $T$, which allows us to identify $N_d(\operad)_T$ with the space of pairs $(f_0,f_1)$ as in the first paragraph.
Writing $\shiftoperator_T(f_0,f_1) = (f_0',f_1')$, we first set $f_0'(e) = 2$ if $e$ is a leaf and $f_0'(e) = f_0(e)$ otherwise.
If $v$ is a vertex, write $\Bell_v = f_0 b_v 1, \dots f_0 b_v k_v$ and $\Bell'_v = f_0' b_v 1, \dots, f_0' b_v k_v$. We have $\Bell_v \leq \Bell_v'$ for all $v$, so we can define $f_1'(v) = \shiftoperator_{\Bell_v, \Bell_v'}f_1(v)$.

Let $\alpha : S \to T$ be a map of $\Omega$ satisfying the following property: if $v$ is a vertex of $S$ so that $\alpha(v)$ is an edge $e$ of $T$, then $e$ is not a leaf edge.
Note that this property is not closed under composition, but every map in $\Omegainj$ satisfies it.
If $\alpha $ is such a map and $S\neq \eta$, then the composite $X_T^{IR} \hookrightarrow N_d(\operad)_T \xrightarrow{\alpha^*} N_d(\operad)_S \xrightarrow{\shiftoperator_S} X_S^{L}$ actually lands in the subspace $X_S \subseteq X_S^L$.
To distinguish from the operator $\alpha^*$ in the dendroidal nerve, we will write $\hat \alpha : X_T \to X_S$ for the 
map defined by $\incmap''_S \hat \alpha = \shiftoperator_S \alpha^* \incmap_T$ (equivalently $\incmap \hat \alpha = \incmap' \shiftoperator \alpha^* \incmap$).
If $S=\eta$, then $\alpha : \eta \to T$ automatically satisfies the indicated property; we will write $\hat \alpha$ for the unique function $X_T \to X_\eta = *$.

We wish to show that the relations which hold among maps in $\Omegainj$ also hold among the hat-maps. 
For this purpose it would be enough to show that hat anticommutes with composition, which we can show in several cases.

\begin{lemma}\label{the lemma}
	Let $\alpha : S \to T$ and $\beta : R \to S$ be two maps of $\Omegainj$ so that $\alpha$ induces a bijection on leaves. Then $\hat \beta \hat \alpha = \widehat{\alpha \circ \beta}$. The same equality holds if $\alpha$ is an arbitrary map of $\Omegainj$ and $\beta$ is an isomorphism.
\end{lemma}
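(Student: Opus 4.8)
The plan is to test the desired equality after composing with the monomorphism $\incmap_R : X_R \to N_d(\operad)_R$. If $R = \eta$ both sides are the unique map to the point $X_\eta = *$, so I may assume $R\neq\eta$, and then $S, T\neq\eta$ as well. Unwinding the defining relations $\incmap_S\hat\alpha = \incmap'_S\shiftoperator_S\alpha^*\incmap_T$ and $\incmap_R\hat\beta = \incmap'_R\shiftoperator_R\beta^*\incmap_S$, together with the fact that $N_d(\operad)$ is a presheaf (so $(\alpha\circ\beta)^* = \beta^*\alpha^*$) and that $\alpha\circ\beta\in\Omegainj$ still satisfies the defining property, I obtain
\begin{align*}
\incmap_R\, \widehat{\alpha\circ\beta} &= \incmap'_R\, \shiftoperator_R\, \beta^*\alpha^*\, \incmap_T, \\
\incmap_R\, \hat\beta\hat\alpha &= \incmap'_R\, \shiftoperator_R\, \beta^*\, \incmap'_S\, \shiftoperator_S\, \alpha^*\, \incmap_T.
\end{align*}
Since $\incmap'_R$ is monic, everything reduces to showing that the ``middle'' factor $\incmap'_S\shiftoperator_S$ may be deleted, i.e.\ that $\shiftoperator_R\beta^*\incmap'_S\shiftoperator_S\alpha^*\incmap_T = \shiftoperator_R\beta^*\alpha^*\incmap_T$ as maps $X_T\to X_R^L$. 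I will also record the elementary retraction identity $\shiftoperator_S\incmap'_S = \id_{X_S^L}$: on $X_S^L$ every leaf is already colored $2$, so $\shiftoperator_S$ leaves $f_0$ unchanged and forces $\Bell_v = \Bell_v'$, whence each $\shiftoperator_{\Bell_v,\Bell_v'} = \id$.

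For the first assertion ($\alpha$ a bijection on leaves), I would show the middle shift is already trivial on the inputs that occur. Writing $\alpha_E : E(S)\to E(T)$ for the induced map on edges, the pulled-back coloring of $\alpha^*\incmap_T(x)$ is $f_0\circ\alpha_E$. The bijection-on-leaves hypothesis means $\alpha_E$ carries each leaf of $S$ to a leaf of $T$, and for $x\in X_T$ every leaf of $T$ is colored $2$; hence every leaf of $S$ is colored $2$, i.e.\ $\alpha^*\incmap_T$ factors through $X_S^L$. Combined with $\shiftoperator_S\incmap'_S = \id$ this gives $\incmap'_S\shiftoperator_S\alpha^*\incmap_T = \alpha^*\incmap_T$, and substituting into the expression for $\incmap_R\hat\beta\hat\alpha$ yields $\incmap_R\widehat{\alpha\circ\beta}$ exactly. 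This case is essentially formal once the edge map is understood.

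For the second assertion ($\beta$ an isomorphism), the crux is the naturality of the shift along isomorphisms: I claim $\incmap'_R\shiftoperator_R\beta^* = \beta^*\incmap'_S\shiftoperator_S$ as maps $N_d(\operad)_S\to N_d(\operad)_R$. Granting this, I rewrite $\incmap'_R\shiftoperator_R\beta^*\incmap'_S\shiftoperator_S = \beta^*\incmap'_S\shiftoperator_S\incmap'_S\shiftoperator_S = \beta^*\incmap'_S\shiftoperator_S$, where $\shiftoperator_S\incmap'_S = \id$ collapses the repeated factor, and the same naturality identifies this with $\incmap'_R\shiftoperator_R\beta^*$; the two displayed composites then agree. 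The work sits in the naturality claim itself. An isomorphism acts on the nerve by relabeling edges along $\beta_E$ and, at each vertex $w$ of $R$, by a permutation $\sigma_w$ of its inputs, so that $\beta^*(f_0,f_1) = (f_0\beta_E,\; w\mapsto \sigma_w^* f_1(\beta w))$. Because $\beta$ preserves leaves, both recipes assign the same new colors, so the equality reduces to matching vertex data, which amounts to the identity $\sigma_w^*\shiftoperator_{\Bell_{\beta w},\Bell'_{\beta w}} = \shiftoperator_{\Bell_{\beta w}\cdot\sigma_w,\,\Bell'_{\beta w}\cdot\sigma_w}\sigma_w^*$ followed by the bookkeeping $\Bell_{\beta w}\cdot\sigma_w = \Bell_w$ (and the primed analogue) relating the input lists at $w$ and at $\beta w$. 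The first is precisely the permutation-compatibility axiom $\sigma^*\circ\shiftoperator_{\Bell,\Bell'} = \shiftoperator_{\Bell\cdot\sigma,\Bell'\cdot\sigma}\circ\sigma^*$ of an operad with shifts. I expect this permutation bookkeeping—keeping track of the non-preserved planar structures and the exact convention for $\sigma_w$—to be the main obstacle of the argument.
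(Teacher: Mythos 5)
Your proposal is correct and follows essentially the same route as the paper's own proof: the first case rests on the observation that a leaf-bijective $\alpha$ makes $\alpha^*\incmap_T$ land in $X_S^L$, where the shift acts as the identity, and the second case rests on the naturality of $\shiftoperator$ against isomorphism pullbacks via the permutation-compatibility axiom $\sigma^*\circ\shiftoperator_{\Bell,\Bell'} = \shiftoperator_{\Bell\cdot\sigma,\Bell'\cdot\sigma}\circ\sigma^*$. The only cosmetic difference is that you collapse the middle factor using the retraction $\shiftoperator_S\incmap'_S = \id$ and two applications of the naturality square, where the paper instead reuses the first paragraph's identity $\beta^*\incmap = \incmap\hat\beta$; the content is identical.
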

\begin{proof}
	Since $\alpha$ sends leaves to leaves, $\alpha^* \incmap: X_T \to N_d(\operad)_T \to N_d(\operad)_S$ already lands in the subspace $X_S^L$, and on this subspace $\incmap' \shiftoperator (x) = x$. Thus $\alpha^*\incmap = \incmap' \shiftoperator \alpha^* \incmap = \incmap \hat \alpha$.
	This equality implies the second in $\incmap \hat \beta \hat \alpha = \incmap' \shiftoperator \beta^* \incmap \hat \alpha = \incmap' \shiftoperator \beta^* \alpha^* \incmap = \incmap' \shiftoperator (\alpha \circ \beta)^* \incmap = \incmap \widehat{\alpha \circ \beta}$. Since $\incmap$ is an injection, the conclusion follows.

	Let us address the second statement.
	Write $\beta^*_L : X^L_S \to X^L_R$ for the restriction of $\beta_*$.
	It is immediate that $\incmap'_R \beta^*_L = \beta^* \incmap'_S$.
	Since $\sigma^* \circ\shiftoperator_{\Bell, \Bell'} = \shiftoperator_{\Bell \cdot \sigma, \Bell' \cdot \sigma}\circ \sigma^*$ whenever $\sigma \in \Sigma_p$ (see page \pageref{permutation thing}), we have $\shiftoperator_S \beta^* = \beta^*_L \shiftoperator_R$.
	As in the first paragraph, $\beta^*\incmap  = \incmap \hat \beta$ because $\beta$ sends leaves to leaves.
    Putting these three facts together, we have $
	\incmap \widehat{\alpha \circ \beta} = 
	\incmap' \shiftoperator (\alpha \circ \beta)^* \incmap = 
	\incmap' \shiftoperator  \beta^* \alpha^* \incmap =
	\incmap' \beta^*_L \shiftoperator \alpha^* \incmap =
	\beta^* \incmap' \shiftoperator \alpha^* \incmap =
	\beta^* \incmap \hat \alpha = \incmap \hat \beta \hat \alpha$, hence $\hat \beta \hat \alpha = \widehat{\alpha \circ \beta}$.
\end{proof}

\begin{theorem}
The collection $\{ X_T \}$ together with the operators $\hat \alpha$ for $\alpha \in \Omegainj$ constitute a semi-dendroidal space.
\end{theorem}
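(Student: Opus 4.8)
The plan is to verify that $T\mapsto X_T$, $\alpha\mapsto\hat\alpha$ defines a (contravariant) functor $\Omegainj^{\mathrm{op}}\to\mathrm{Top}$, i.e.\ a space-valued presheaf. Two things must be checked: that $\widehat{\id_T}=\id_{X_T}$ and that $\widehat{\alpha\circ\beta}=\hat\beta\hat\alpha$ for every composable pair in $\Omegainj$. The first is immediate from the defining relation $\incmap''\hat{\id}=\shiftoperator\,\id^*\incmap=\shiftoperator\incmap$: on $X_T$ every leaf is already sent to $2$, so each $\shiftoperator_{\Bell_v,\Bell_v}$ is an identity, whence $\shiftoperator\incmap=\incmap''$ and $\hat{\id}=\id$ by injectivity of $\incmap''$.

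For composition, recall that $\Omegainj$ is generated by isomorphisms and face maps, and that isomorphisms and inner faces induce bijections on leaves. Lemma \ref{the lemma} therefore already yields $\widehat{\alpha\beta}=\hat\beta\hat\alpha$ whenever $\alpha$ is an isomorphism or an inner face (with $\beta$ arbitrary), and whenever $\beta$ is an isomorphism (with $\alpha$ arbitrary). I would then induct on the length of a factorization $\alpha=\delta_1\cdots\delta_k$ into generators: using Lemma \ref{the lemma} to strip a leaf-bijective leading generator $\delta_1$, the only remaining case is when $\delta_1$ is an outer face. Thus the whole theorem reduces to the atomic identity $\widehat{\delta\mu}=\hat\mu\hat\delta$ for a single outer face $\delta\colon S\to T$ and an arbitrary $\mu\in\Omegainj$; granting this, the induction closes exactly as in the proof of the Lemma, since $\hat\alpha=\hat{\alpha'}\hat{\delta_1}$ and $\widehat{\alpha\beta}=\widehat{\alpha'\beta}\,\hat{\delta_1}=\hat\beta\,\hat{\alpha'}\,\hat{\delta_1}$, where $\alpha'=\delta_2\cdots\delta_k$. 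I would prove the atomic identity directly, not by a further induction on $\mu$, so as to avoid circularity.

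To prove the atomic identity I would first dispose of root faces: if $\delta$ deletes the root vertex, the edge it exposes becomes the new root (still colored $1$) rather than a mis-colored leaf, so $\shiftoperator_S\delta^*\incmap=\delta^*\incmap$ and the identity is trivial. The substance is a top face $\delta$, which deletes a top vertex $v$ and turns $e=\mathrm{out}(v)$ into a leaf of $S$. Unwinding $\incmap\hat\mu=\incmap'\shiftoperator\mu^*\incmap$ on both sides reduces $\widehat{\delta\mu}=\hat\mu\hat\delta$ to the equality $\shiftoperator_R\mu^*(z)=\shiftoperator_R\mu^*(z')$, where $z=\delta^*\incmap(x)$ and $z'=\incmap'\shiftoperator_S(z)$ agree except at the single exposed leaf $e$ (colored $1$ in $z$, colored $2$ in $z'$) and at the adjacent vertex $w$, whose label in $z'$ is the $\shiftoperator$ of its label in $z$ along the single slot corresponding to $e$.

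The heart of the matter, and the step I expect to be the main obstacle, is this last interchange. Since every map of $\Omegainj$ is injective on edges, $e$ has at most one preimage $\bar e$ in $R$; if it has none then $w$ is also deleted by $\mu$ and $\mu^*(z)=\mu^*(z')$ outright, while if $\bar e$ exists it is again a leaf of $R$, and both sides re-shift it to $2$. The point is that the pre-shift performed at $w$ in $z'$ is absorbed into the global re-shift carried out by $\shiftoperator_R$ at the parent of $\bar e$: when $\mu$ grafts other vertices into $w$ the relevant labels are composed by the operadic $\circ_i$, and \eqref{operad compatibility} guarantees that shifting the $e$-slot commutes past this composition, while the permutation-compatibility $\sigma^*\shiftoperator_{\Bell,\Bell'}=\shiftoperator_{\Bell\cdot\sigma,\Bell'\cdot\sigma}\sigma^*$ absorbs the planar reindexing. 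Verifying that these two axioms, together with the composition law for the $\shiftoperator_{\Bell,\Bell'}$, make the two re-shifts coincide is the only genuine computation; everything else is bookkeeping on trees.
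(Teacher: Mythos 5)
Your proposal is correct, but it is organized quite differently from the paper's argument. The paper leans on the presentation of $\Omegainj$ by isomorphisms and face maps subject to quadratic relations: it only verifies $\hat \delta \hat \partial = \hat \partial' \hat \delta'$ for commuting squares of generators, and after Lemma \ref{the lemma} disposes of every leaf-bijective case, the residue is a composite $T_0 \xrightarrow{\delta} T_1 \xrightarrow{\partialvz} T_2$ with $\partialvz$ an outer face at a top vertex and $\delta$ a \emph{single} face map, settled in three short cases (root face, leaf face, inner face), the inner case being precisely the application of \eqref{operad compatibility} you describe. You instead prove functoriality $\widehat{\alpha\circ\beta}=\hat\beta\hat\alpha$ on the nose by peeling generators off $\alpha$, reducing everything to the ``atomic'' identity $\widehat{\delta\mu}=\hat\mu\hat\delta$ for $\delta$ an outer top face and $\mu$ an \emph{arbitrary} map of $\Omegainj$. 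That identity is strictly stronger than any one of the paper's squares, but it is true, and your analysis of it is sound: the exposed leaf $e$ has at most one preimage $\bar e$, necessarily a leaf of $R$ since maps of $\Omegainj$ do not collapse vertices; if $\bar e$ is absent the vertex $w$ below $e$ does not contribute to $\mu^*$, and if it is present the pre-shift at $w$ is absorbed by $\shiftoperator_R$ at the vertex below $\bar e$ via \eqref{operad compatibility}, the $\Sigma$-equivariance, and the transitivity $\shiftoperator_{\Bell',\Bell''}\circ\shiftoperator_{\Bell,\Bell'}=\shiftoperator_{\Bell,\Bell''}$. What your route buys is independence from the combinatorial fact that those squares generate all relations in $\Omegainj$; what it costs is that the one computation you defer is heavier than the paper's, because for general $\mu$ the vertex of $R$ below $\bar e$ maps to an entire subtree of $S$ containing $w$, so \eqref{operad compatibility} must be applied iteratively across that subtree rather than to a single $\circ_{i_1}$ as in the paper's Case 3; that step, which you flag as ``the only genuine computation,'' should be written out in full, though it does go through. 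A small bonus of your version is the explicit check that $\widehat{\id}=\id$, which the paper leaves implicit.
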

\begin{proof}
It is enough to show, given a commutative diagram 
\begin{tikzcd}
	T_0 \rar{\delta} \dar{\partial'} & T_1 \dar{\partial} \\
	T_2 \rar{\delta'} & T_3
\end{tikzcd}
where $\partial', \partial$ are face maps %\footnote{`Face map' should always be interpreted to mean an \emph{elementary} face map.} 
and $\delta, \delta'$ are either both face maps or isomorphisms, that $\hat \delta \hat \partial = \hat \partial' \hat \delta'$.
This certainly follows if $\hat \delta \hat \partial = \widehat{\partial \circ \delta}$ and $\hat \partial' \hat \delta' = \widehat{\delta' \circ \partial'}$.
Several cases of these equalities have been established in lemma \ref{the lemma}; note also that they are both obvious if $T_0 = \eta$ since $X_\eta = *$. We will now sweep up the few remaining cases.

Consider a composition $T_0 \overset{\delta}\to T_1 \overset{\partialvz}\to T_2$ of face maps with $T_0 \neq \eta$ and $\partialvz$ an outer face map which chops off a vertex $v_0$ whose incoming edges are leaves.
Since $|T_2| > 1$, the output of $v_0$ is not the root edge, so $e_0 = out(v_0)$ is the $i_0$th input of some other vertex $w_0$.

Fix an arbitrary $(f_0,f_1) \in X_{T_2} \subseteq N_d(\operad)_{T_2}$
and write $\shiftoperator_{T_1}\partialvz^*(f_0,f_1) = (g_0, g_1)$, $\shiftoperator_{T_0}\delta^* (g_0,g_1) = (h_0, h_1)$, and $\shiftoperator_{T_0}(\partialvz \circ \delta)^*(f_0,f_1)  = (h_0', h_1')$.
Since $(h_0,h_1)$ and $(h_0',h_1')$ are both in $X_{T_0}$, we know that $h_0 = h_0'$ (both send all leaves to $2$ and all other edges to $1$).
Showing $(h_0, h_1) = (h_0', h_1')$ is the same as showing $h_1 = h_1'$, and we split this task into several cases.
Notice that since $\partialvz$ is external, we can immediately compute $g_1$:  
\begin{equation}\label{equation g}
	g_1 (v) = \begin{cases}
		f_1(v) & v\neq w_0 \\
		\shiftoperator_{\Bell, \Bell'} f_1(w_0) & v = w_0
	\end{cases}
\end{equation}
where $\Bell$ and $\Bell'$ are identical except at entry $i_0$. 
It will be convenient to write $\shiftoperator_{\Bell, \Bell'}$ as $\shiftoperator^{i_0}$, indicating which entry has changed. More generally, given $\Bell \leq \Bell'$, let $I$ be the set so that $i\in I$ if and only if $l_i < l_i'$ and write $\shiftoperator_{\Bell, \Bell'} = \shiftoperator^I$. 
We can then rewrite \eqref{equation g} using the planar structure as $g_1(v) = \shiftoperator^{b^{-1}_{v}(e_0)}f_1(v)$.

In all three cases below, write $(\partialvz \circ \delta)^*(f_0,f_1) = (\tilde h_0, \tilde h_1)$.

\noindent \textbf{Case 1: $\delta$ is external at the root.}
As in the proof of lemma \ref{the lemma}, $(h_0,h_1) = \delta^*(g_0,g_1)$ since $\delta$ is a bijection on leaves. Further, since $\delta$ is external, $h_1$ is a restriction of $g_1$. Since $\partialvz$ and $\delta$ are external, we have $\tilde h_1 (v) = f_1(v)$ whenever the left hand side is defined.
Calculating $h_1'(v)$, we see that it is just $f_1(v)$ unless $v = w_0$, in which case we have $h_1'(w_0) = \shiftoperator_{\Bell, \Bell'} f_1(w_0).$ Thus $h_1'(v) = g_1(v) = h_1(v)$.

\noindent \textbf{Case 2: $\delta$ is external at a leaf vertex $v_1$.}
Let $e_1$ be the output edge of $v_1$.
As above, $\tilde h_1$ is just a restriction of $f_1$. We have $h_1'(v) = \shiftoperator^{b_v^{-1}\{e_0, e_1\}} f_1(v) = \shiftoperator^{b_v^{-1}(e_1)} \shiftoperator^{b_v^{-1}(e_0)} f_1(v) = \shiftoperator^{b_v^{-1}(e_1)} g_1(v) = h_1(v)$, so $h_1 = h_1'$.

\noindent \textbf{Case 3: $\delta$ is internal at an edge $e_1$.}
As in the proof of lemma \ref{the lemma}, $(h_0,h_1) = \delta^*(g_0,g_1)$ since $\delta$ is a bijection on leaves.
We will write $v_1$ and $w_1$ for the two vertices that $e_1$ connects, with $e_1 = out(v_1)$ and $h_{w_1}(i_1) = e_1 \in in(w_1)$. It is possible that $w_0 = v_1$ or $w_0 = w_1$. Write $V(T_0) =\{ \bar v \} \sqcup V(T_1) \setminus \{v_1, w_1\}$ with $\delta(\bar v) = w_1 \circ_{e_1} v_1$.
We have that $\tilde h_1(\bar v) = f_1(w_1) \circ_{e_1} f_1(v_1)$ and otherwise $\tilde h_1(v) = f_1(v)$. 
Then
\[
	h_1'(\bar v) = \shiftoperator^{b^{-1}_{\bar v}(e_0)}(f_1(w_1) \circ_{i_1} f_1(v_1)) \overset{\eqref{operad compatibility}}= [\shiftoperator^{b^{-1}_{w_1}(e_0)} f_1(w_1)] \circ_{i_1} [\shiftoperator^{b^{-1}_{v_1}(e_0)} f_1(v_1)] = g_1(w_1) \circ_{i_1} g_1(v_1) = h_1(\bar v)
\]
and otherwise  $h_1'(v) = \shiftoperator^{b^{-1}_{\bar v}(e_0)} f_1(v) = g_1(v) = h_1(v)$.
Thus $h_1 = h_1'$.
\end{proof}

Given any (semi-)dendroidal space $Z$, there is the \emph{Segal map},
\[
	Z_T \to \prod_{v\in T} Z_{C_v}
\]
induced by the corolla inclusions $C_v \to T$ as $v$ ranges over all vertices of $T$.
If $Z$ is the dendroidal nerve of a one-colored operad, then the Segal map is an isomorphism.
It is interesting to weaken this and ask that the Segal map is merely a weak equivalence -- as in \cite[Theorem 1.1]{bh1} and \cite[Section 9]{cm-simpop} we expect this notion to be closely related to one-colored topological operads.

\begin{theorem}
Suppose that $\operad$ is an operad with shifts and $X$ is the associated semi-dendroidal space.
Then $X$ satisfies the \emph{Segal condition}, that is, for $T \neq \eta$, the Segal map $X_T \to \prod_{v\in T} X_{C_v}$ is a weak equivalence and $X_\eta = *$.
\end{theorem}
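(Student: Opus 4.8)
The plan is to exploit the fact that on $X_T$ the colouring is completely rigid, so that the Segal map is literally a finite product of shift operators, each a weak equivalence by hypothesis.

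First I would identify the spaces concretely. A point of $X_T$ is an operad map $\Omega(T) \to \operad$ whose colour function sends every leaf to $2$ and every other edge (root and internal) to $1$; thus the colour function $f_0$ is entirely determined by $T$, with no choice to be made. For each vertex $v$, let $\Bell_v$ be the resulting list of input colours (so the $i$th entry is $2$ exactly when the $i$th input of $v$ is a leaf and $1$ when it is internal), and note that the output of $v$ is always $1$. The usual Segal isomorphism for the dendroidal nerve identifies $N_d(\operad)_T$ with the limit over the vertices subject to the colour-matching constraints along internal edges. Restricting to $X_T$, those constraints are automatically satisfied, since every internal edge is coloured $1$ at both of its ends, so the limit collapses to an honest product. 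Recording the tuple $(f_1(v))_v$ of operations therefore yields a homeomorphism
\[
	X_T \;\cong\; \prod_{v \in T} \operad(\Bell_v; 1).
\]
At a corolla every input is a leaf, so the same analysis gives $X_{C_v} = \operad(\terminallist^{k_v}; 1)$.

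Next I would unwind the corolla inclusion. Writing $\iota_v : C_v \to T$ for the inclusion (a map of $\Omegainj$), the $v$-component of the Segal map is the hat-map $\hat\iota_v$, defined by $\incmap''_{C_v} \hat\iota_v = \shiftoperator_{C_v} \iota_v^* \incmap_T$. Unravelling this: $\iota_v^* \incmap_T$ restricts a point of $X_T$ to its operation $f_1(v) \in \operad(\Bell_v; 1)$ at $v$, carrying the input colours $\Bell_v$ inherited from $T$, and then $\shiftoperator_{C_v}$ raises every leaf of $C_v$ to $2$. But in the corolla \emph{every} input is a leaf, so this is exactly the single shift $\shiftoperator_{\Bell_v, \terminallist^{k_v}}$. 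Since the product homeomorphism above is given by these very restrictions (before shifting), the Segal map $X_T \to \prod_v X_{C_v}$ is, under the product identifications, precisely $\prod_{v} \shiftoperator_{\Bell_v, \terminallist^{k_v}}$.

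Finally, each $\shiftoperator_{\Bell_v, \terminallist^{k_v}}$ is a weak homotopy equivalence by the defining hypothesis on an operad with shifts, and $T$ has only finitely many vertices; a finite product of weak homotopy equivalences is again one, since homotopy groups commute with finite products, so the Segal map is a weak equivalence. The case $T = \eta$ is immediate as $X_\eta = *$. I expect the only genuine content to lie in the first paragraph: the observation that the rigid colouring forces the fibre product defining $N_d(\operad)_T$ to degenerate into a genuine product. Once that decomposition is in hand, matching the Segal map with a product of shifts is bookkeeping and the weak-equivalence conclusion is formal.
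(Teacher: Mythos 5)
Your proposal is correct and follows essentially the same route as the paper: identify $X_T$ with the product $\prod_{v\in T}\operad(\Bell_v;1)$ via the rigid colouring, observe that the corolla component of the Segal map is the projection followed by $\shiftoperator_{\Bell_v,\terminallist^{k_v}}$, and conclude that the Segal map is a finite product of weak equivalences. The paper's proof is exactly this, stated slightly more tersely.
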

\begin{proof}
By definition, $X_\eta$ is a point.
Let $T$ be a nontrivial tree, and let $f: E(T) \to \{ 1,2\}$ be the function which takes the leaves to $2$, the internal edges to $1$, and the root to $1$.
Write $\Bell_v = fb_v(1), \dots, fb_v(k_v)$. 
Then $X_T = \prod_{v \in T} \operad(\Bell_v; 1)$.
Suppose that $w \in T$ and let $\alpha: C_{w} \to T$ be the corolla inclusion. Then $\hat \alpha : X_T \to X_{C_{w}}$ is the composite
\[
	X_T = \prod_{v \in T} \operad(\ell_v; 1) \xrightarrow{\pi_w} \operad(\Bell_w; 1) \xrightarrow{\shiftoperator_{\ell_w,\terminallist^{|w|}}} \operad(\terminallist^{|w|}; 1) = X_{C_w}.
\]
Then the Segal map $X_T = \prod_{v} \operad(\Bell_v; 1) \to \prod_v \operad(\terminallist^{|v|}; 1) = \prod_v X_{C_v}$
is the product of weak homotopy equivalences, hence a weak homotopy equivalence.
\end{proof}

% Notice that if $i: \{1 \} \hookrightarrow \{1,2\}$ is the inclusion, we can consider the underlying 1-colored operad $i^*\operad$. The maps $N_d(i^*\operad)_T \to N_d(\operad)_T \xrightarrow{\shiftoperator_T} X_T^L$ land in $X_T$ and constitute a map of semi-dendroidal spaces $N_d(i^*\operad) \to X$. At a corolla $C_k$, this map is just the weak equivalence $\operad(\initiallist^k; 1) \to \operad(\terminallist^k;1)$, hence by the Segal condition $N_d(i^*\operad)_T \to X_T$ is a weak equivalence for all $T$.

\begin{corollary}\label{main example}
	There is a semi-dendroidal space $X$ satisfying the Segal condition so that $X_{C_k}$ is the configuration space of $k$ points in $\disk$.
\end{corollary}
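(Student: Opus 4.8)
The plan is to obtain the corollary by specializing the general machinery built above to the concrete geometric example. First I would take $(\operad, \shiftoperator_{\Bell,\Bell'})$ to be the operad of points and little disks from the Example; the content established there --- that each $\shiftoperator_{\Bell,\Bell'}$ is a weak homotopy equivalence, and that these maps respect the operad structure and the symmetric group actions --- is precisely what licenses regarding it as an operad with shifts. Feeding this operad into the first theorem yields a semi-dendroidal space $X$, and the second theorem then supplies the Segal condition together with the normalization $X_\eta = *$.

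The only remaining point is to identify $X_{C_k}$. I would observe that the corolla $C_k$ has a single vertex whose incoming edges are exactly its $k$ leaves and whose outgoing edge is the root, and that it has no internal edges. The coloring demanded by the definition of $X_{C_k}$ therefore forces all leaves to $2$ and the root to $1$, so that the list attached to the unique vertex is $\terminallist^k$. Hence, under the identification of a point of the dendroidal nerve with a pair $(f_0,f_1)$, a point of $X_{C_k}$ is nothing more than a point of $\operad(\terminallist^k; 1)$, and the Example identifies this space with the configuration space of $k$ ordered points in $\disk$ on the nose.

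I do not expect any serious obstacle: all the substantive work has already been carried out in the preceding results, and the corollary is simply their combination specialized to one example. If any step merits a moment's care, it is confirming that the coloring on $C_k$ is genuinely forced (so that $X_{C_k}$ carries no free parameters beyond the choice in $\operad(\terminallist^k;1)$) and that the Example's verification of the weak-equivalence property truly discharges the hypotheses of both theorems. Both checks are immediate from the definitions.
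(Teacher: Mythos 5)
Your proposal is correct and follows exactly the paper's route: apply the two theorems to the operad of points and little disks from the Example, then identify $X_{C_k} = \operad(\terminallist^k;1)$ as the ordered configuration space of $k$ points in $\disk$. The extra detail you supply about the forced coloring on the corolla is a correct unpacking of what the paper leaves implicit.
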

\begin{proof}
Apply the previous theorem to the operad with shifts given by configurations of points and disks in the unit disk. As we mentioned above, $X_{C_k} = \operad(\terminallist^k; 1)$ is the ordered configuration space of $k$ points in the disk.  
\end{proof}

It is natural to ask whether the semi-dendroidal space $X$ admits the structure of a dendroidal space, that is, whether one can define degeneracy operators which are compatible with the existing face maps.
The reader may have noticed that we have already defined $\hat \alpha : X_T \to X_S$ for many maps of $\Omega$ which were not in $\Omegainj$, including all degeneracy maps except those that are degenerate at a leaf. Further, the proof of lemma \ref{the lemma} shows that many of the expected relations among faces and degeneracies hold with these definitions.

Nevertheless, we will now show that we cannot in general extend to a dendroidal structure.

\begin{proposition}\label{no dendroidal}
	The semi-dendroidal space $X$ from corollary \ref{main example} does not admit the structure of a dendroidal space.
\end{proposition}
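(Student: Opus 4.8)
The plan is to derive a contradiction from the \emph{leaf} degeneracy, which (as noted above) is precisely the operator the semi-dendroidal structure fails to supply. The linear trees $L_0=\eta, L_1, L_2, \dots$ span a full subcategory of $\Omega$ isomorphic to $\Delta$, so any dendroidal structure on $X$ would restrict to a simplicial space $[n]\mapsto X_{L_n}$ in which all simplicial identities hold among the operators $\hat\alpha$. Using the description $X_{L_n}=\prod_{v}\operad(\Bell_v;1)$ from the proof of the Segal condition, one computes for $n\ge 1$ that $X_{L_n}=\operad(2;1)\times\operad(1;1)^{n-1}$: the vertex adjacent to the leaf lies in $\operad(2;1)$ and is a point of $\disk$, while each lower vertex lies in $\operad(1;1)$ and is an affine disk-embedding.

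First I would pin down the shape of the degeneracy $s_0\colon X_{L_n}\to X_{L_{n+1}}$ doubling the leaf edge, using only the two section identities $d_0 s_0 = d_1 s_0 = \id$. Both faces are already present in the semi-dendroidal structure: $d_0$ chops the top vertex, so the formerly internal edge becomes a leaf and $\shiftoperator$ replaces the adjacent disk by its center, while $d_1$ contracts the top internal edge and is an operadic composite. A short computation shows that these force $s_0$ to send a point to a disk \emph{centered at that point}; the center is determined and only the radius remains free. Write $r_1\colon \disk\to\mathbb R_{>0}$ for the radius function of $s_0\colon X_{L_1}\to X_{L_2}$ and $\rho$ for that of $s_0\colon X_{L_2}\to X_{L_3}$.

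The key step is the simplicial identity $d_2 s_0 = s_0 d_1$ at level $X_{L_2}\to X_{L_2}$. The left side contracts the bottom internal edge of $L_3$, which composes two disk-embeddings and hence \emph{multiplies} their radii; the right side first contracts the internal edge of $L_2$ (an operadic composite landing in $\operad(2;1)$, namely the bottom disk applied to the top point) and then applies $s_0$. Comparing radii yields the scaling relation $r_a\cdot\rho(0,a)=r_1(a(0))$, where $r_a$ is the radius of the input disk $a\in\operad(1;1)$. Finally, since the disk produced by $s_0$ must actually embed into $\disk$ we have $\rho(0,a)\le 1$, whence $r_1(p)\le r_a$ whenever $a$ is a disk of radius $r_a$ centered at $p$; letting $r_a\to 0$ forces $r_1(p)\le 0$, contradicting positivity of radii in $\operad(1;1)$.

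I expect the main obstacle to be locating the right identity rather than computing with it. Most simplicial identities are satisfied automatically: the section relations only constrain centers and base-points, leaving the radius untouched, so one must identify the single identity $d_2 s_0 = s_0 d_1$ whose two sides genuinely compare radii. The remaining care is bookkeeping, namely translating the outer and inner face operators into the shift map $\shiftoperator$ (which discards a disk's radius) and operadic composition (which multiplies radii). The conceptual content is that the leaf degeneracy would have to fabricate a positive-radius disk out of a point, while the composition identity shrinks that radius to zero.
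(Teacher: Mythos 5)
Your proposal is correct and follows essentially the same route as the paper: restrict to the linear trees to get a (semi-)simplicial space, use the two section identities to force the leaf degeneracy to produce a disk centered at the given point with only the radius undetermined, and then extract a contradiction from the mixed identity (your $d_2 s_0 = s_0 d_1$, which is the paper's $d_1 s_2 = s_1 d_1$ under the opposite indexing of the linear trees) by observing that operadic composition multiplies radii. The only differences are cosmetic: you index the simplices from the leaf rather than the root, and your endgame specializes the point to $\zero$ so that the bound $\rho \le 1$ and $r_a \to 0$ give $r_1(p) \le 0$ outright, whereas the paper keeps the point general and instead invokes continuity of the radius function at a fixed center.
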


% In this section, we restrict to our main example of the semi-dendroidal space from corollary \ref{main example} and show that $X$ does not admit a dendroidal structure.
We devote the remainder of the paper to the proof of this proposition. 
We will prove this by looking at its underlying semi-simplicial space (also called $X$) and showing that no choice of degeneracy operators gives a simplicial space. 
For convenience, we will write points of $X_k$ as lists $[a_1, \dots, a_{k-1}, P]$, where $a_1, \dots, a_{k-1} \in \operad(1;1) \subset (0,1] \times \disk$ and $P \in \operad(2;1) = \disk$.
Each $a_i$ is an embedding $\disk \to \disk$ of the form $a_i(x) = r_ix + c_i$ where $c_i \in \disk \subset \mathbb R^n$ and $r_i > 0$. 
For $k\geq 1$, the face maps are
\[
	d_i [a_1, \dots, a_{k-1}, P] = \begin{cases}
		[a_2, \dots, a_{k-1}, P] & i = 0 \\
		[a_1, \dots, a_i \circ a_{i+1}, \dots, a_{k-1}, P] & 1\leq i \leq k-2 \\
		[a_1, \dots, a_{k-2}, a_{k-1}(P)] & i = k-1 \neq 0 \\
		[a_1, \dots, a_{k-2}, a_{k-1}(\zero)]  & i = k.
	\end{cases}
\]
% Notice that the last term in $d_{k-1}[\underline a]$ is the constant $a_{k-1}(c_k)$ and the last term in $d_k[\underline a]$ is the constant $a_{k-1}(0)$.

We now attempt to construct degeneracy operators in low degrees, and eventually show they cannot be chosen to satisfy all of the simplicial identities. We only need information about three of the degeneracy maps, namely $s_i : X_i \to X_{i+1}$ for $i=0,1,2$.
The map $s_0: * = X_0 \to X_1 = \disk$ just picks out a point, which we will call $A$ for the moment.
Let us examine $s_1 : X_1 \to X_2$.
Since $d_0 s_1 [P] = s_0d_0[P] = [A]$, we have $s_1[P] = [r^P x + c^P, A]$, where $r: \disk \to (0,1]$ and $c: \disk \to \disk$ are continuous maps satisfying $0 < r^P \leq dist(c^P, \partial \disk)$ for all $P$.
Since $[P] = d_2 s_1[P]$, we have $P = r^P\zero + c^P = c^P$, hence $c$ is the identity map on $\disk$ and $s_1[P] = [r^Px + P, A]$. Finally, since $[P] = d_1 s_1[P] = [r^PA + P]$, we conclude that $A = \zero$.
Thus $s_0[\,] = [\zero]$ and $s_1[P] = [r^Px + P, \zero]$ for some unspecified function $r^P$.

We now turn to $s_2 : X_2 \to X_3$. We immediately know that $s_2[a,P]$ is of the form
\[
	s_2[a,P] = [a, R^{a,P}x + C^{a,P}, Q^{a,P}]
\]
by examining the first term of $[a,P] = d_3s_2[a,P]$. Here, $R : X_2 \to (0,1]$, $C: X_2 \to \disk$, and $Q: X_2 \to \disk$ are continuous functions.
The second term of $[a, P] = d_3s_2[a,P]$ is $R^{a,P} \zero + C^{a,P}$, so $C^{a,P} = P$.
Comparing the second entries in  
\[
	[a, P] = d_2 s_2 [a,P] = d_2[a, R^{a,P}x + P, Q^{a,P}] = [a, R^{a,P}Q^{a,P} + P],
\]
we see that $Q^{a,P} = \zero$ since $R^{a,P}$ is never zero.
Thus $s_2[a, P] = [a, R^{a,P}x + P, \zero]$ for some function $R^{a,P}$.

% So far in our calculation of $s_i: X_2 \to X_3$ we have only used the identities $d_js_j = \id = d_{j+1} s_j$.
In giving form to $s_2$, we used only the identities $d_3 s_2 = \id = d_2 s_2$. 
The main trouble is with the identity $d_1 s_2 = s_1 d_1$.
We can calculate (writing $a(x) = r_1 x + c_1$)
\[
	d_1s_2[a,P] = d_1 [a, R^{a,P}x + P, \zero] = [r_1 R^{a,P} x + r_1 P + c_1, \zero]
\]
and
\[
	s_1 d_1 [a,P] = s_1 [a(P)] = [r^{a(P)} x + a(P), \zero].
\]
If $d_1s_2 = s_1d_1$, then we would have $r_1 R^{a,P} = r^{a(P)}$ for all $a$ and $P$. 
In the next paragraph, we will show that this is not possible.

Fix $P, c_1\in \disk$ and let $B = 1 - |c_1|$. Write $f(t) = R^{tx + c_1, P}$ and $g(t) = r^{tP+c_1}$, which are positive real-valued functions.
The function $f$ is defined on $(0, B]$ and is bounded $|f(t)| \leq 1$.
The function $g$ is defined on the closed interval $[0, B]$ and is continuous from the right at zero.
Assuming $d_1s_2 = s_1 d_1$ in the previous paragraph implies that $tf(t) = g(t)$ for all $t\in (0, B]$. Since $f$ is bounded, the left hand side approaches $0$ as $t$ goes to zero, while the limit of the right hand side is $g(0) = r^{c_1} > 0$. This is a contradiction, hence $d_1s_2 \neq s_1 d_1$.

\subsection*{Acknowledgments} 
I'd like to thank Jonas Hartwig for encouraging me to write this paper in the first place, John Bourke and Steve Lack for being interested in this work over the past few months, and Christopher Lustri for answering a question at a key moment.
I would especially like to thank Gabriel C.\ Drummond-Cole, as we worked through something close to the proof of proposition \ref{no dendroidal} together.

\bibliographystyle{amsplain}
\bibliography{css}

\providecommand{\bysame}{\leavevmode\hbox to3em{\hrulefill}\thinspace}
\providecommand{\MR}{\relax\ifhmode\unskip\space\fi MR }
% \MRhref is called by the amsart/book/proc definition of \MR.
\providecommand{\MRhref}[2]{%
  \href{http://www.ams.org/mathscinet-getitem?mr=#1}{#2}
}
\providecommand{\href}[2]{#2}
\begin{thebibliography}{1}

\bibitem{bm}
Clemens Berger and Ieke Moerdijk, \emph{On an extension of the notion of
  {R}eedy category}, Math. Z. \textbf{269} (2011), no.~3-4, 977--1004.
  \MR{2860274 (2012i:18015)}

\bibitem{bh1}
Julia~E. Bergner and Philip Hackney, \emph{Group actions on {S}egal operads},
  Israel J. Math. \textbf{202} (2014), no.~1, 423--460. \MR{3265328}

\bibitem{cm-simpop}
Denis-Charles Cisinski and Ieke Moerdijk, \emph{Dendroidal sets and simplicial
  operads}, J. Topol. \textbf{6} (2013), no.~3, 705--756.

\bibitem{lectures}
Ieke Moerdijk, \emph{Lectures on dendroidal sets}, Simplicial methods for
  operads and algebraic geometry, Adv. Courses Math. CRM Barcelona,
  Birkh\"auser/Springer Basel AG, Basel, 2010, Notes written by Javier J.
  Guti\'errez, pp.~1--118. \MR{2778589}

\bibitem{mw}
Ieke Moerdijk and Ittay Weiss, \emph{Dendroidal sets}, Algebr. Geom. Topol.
  \textbf{7} (2007), 1441--1470. \MR{2366165 (2009d:55014)}

\end{thebibliography}

\end{document}